\newtheorem{theorem}{Theorem}[section]
\newtheorem{lemma}[theorem]{Lemma}
\newtheorem{proposition}[theorem]{Proposition}
\theoremstyle{definition}
\newtheorem{definition}[theorem]{Definition}
\newtheorem{example}[theorem]{Example}
\theoremstyle{remark}
\newtheorem{remark}[theorem]{Remark}
\numberwithin{equation}{section}
\begin{document}
\setcounter{page}{1}

\title[Operator convexity in Krein spaces]{Operator convexity in Krein spaces}

\author[M.S. Moslehian and M. Dehghani]{M. S. Moslehian$^1$ and M. Dehghani $^{1,2}$}

\address{$^{1}$ Department of Pure Mathematics, Center of Excellence in
Analysis on Algebraic Structures (CEAAS), Ferdowsi University of
Mashhad, P. O. Box 1159, Mashhad 91775, Iran}
\email{\textcolor[rgb]{0.00,0.00,0.84}{moslehian@um.ac.ir,
moslehian@member.ams.org}}

\address{$^{2}$ Department of Pure and Applied Mathematics, University of Yazd, Yazd 89195-741, Iran}
\email{\textcolor[rgb]{0.00,0.00,0.84}{e.g.mahdi@gmail.com}}

\subjclass[2010]{Primary 47A63; Secondary 46C20, 47B50.}

\keywords{Krein space; indefinite inner product; Krein-operator
convex function; $J$-selfadjoint operator; $J$-contraction; Julia operator.}

\begin{abstract}
We introduce the notion of Krein-operator convexity in the setting
of Krein spaces. We present an indefinite version of the Jensen
operator inequality on Krein spaces by showing that if
$(\mathscr{H},J)$ is a Krein space, $\mathcal{U}$ is an open set
which is symmetric with respect to the real axis such that
$\mathcal{U}\cap\mathbb{R}$ consists of a segment of real axis and
$f$ is a Krein-operator convex function on $\mathcal{U}$ with
$f(0)=0$, then
\begin{eqnarray*}
f(C^{\sharp}AC)\leq^{J}C^{\sharp}f(A)C
\end{eqnarray*}
for all $J$-positive operators $A$ and all invertible $J$-contractions $C$
such that the spectra of
$A$, $C^{\sharp}AC$ and $D^{\sharp}AD$ are contained in $\mathcal{U}$,
where $D$ is a defect operator for $C^{\sharp}$.\\
We also show that in contrast with usual operator convex functions
the converse of this implication is not true, in general.
\end{abstract} \maketitle

\section{Introduction and preliminaries}
\noindent Linear spaces with indefinite inner products were used
for the first time in the quantum field theory in physics by Dirac
\cite{Dirac} and Pauli \cite{Pa}. Their first mathematical
definition was provided by Pontrjagin \cite{Po} and since then
they have been studied by many mathematicians. Krein spaces as an
indefinite generalization of Hilbert spaces were formally defined
by Ginzburg \cite{Gi} and applies in the quantum field theory by
Jak\'obczyk \cite{JAK} and others. Strohmaier \cite{STR} applied
Krein spaces in the definition of semi-Riemannian spectral triples
in noncommutative geometry. Bebiano et al. \cite{Bebiano2} proved
an extension of the classical theory of Courant and Fisher to the
case of $J$-Hermitian matrices. We present the standard
terminology and some basic results on Krein spaces.
The reader is referred to \cite{Ando, Azizov, Bogner} for a complete exposition on the subject.\\

Let $(\mathscr{H},\langle\cdot,\cdot\rangle)$ be a Hilbert space
and $\mathbb{B}(\mathscr{H})$ denote the $C^*$-algebra of all
bounded linear operators on $\mathscr{H}$ with the identity
operator $I_{\mathscr{H}}$. An operator
$T\in\mathbb{B}(\mathscr{H})$ is called positive if $\langle
Tx,x\rangle\geq 0$  for all $x\in\mathscr{H}$. If $T$ is a
positive invertible operator we write $T>0$.
For bounded selfadjoint operators $T$ and $S$ on $\mathscr{H}$, we say $T\leq S$ if $S-T\geq 0$.\\
Suppose that a nontrivial selfadjoint involution $J$ on
$\mathscr{H}$, i.e. $J=J^*=J^{-1}$, is given to produce an
indefinite inner product
\begin{eqnarray*}
[x,y]_J:=\langle Jx,y\rangle\qquad(x,y\in \mathscr{H}).
\end{eqnarray*}
In this case $(\mathscr{H},J)$ is called a Krein space. The
operators ${\rm P}_{+}=\frac{I+J}{2}$ and ${\rm
P}_{-}=\frac{I-J}{2}$ are orthogonal projections onto
$\mathscr{H}_{+}={\rm ran}({\rm P}_{+})$ and $\mathscr{H}_{-}={\rm
ran}({\rm P}_{-})$, respectively and
\begin{eqnarray*}
\mathscr{H}=\mathscr{H}_{+}\oplus \mathscr{H}_{-}.
\end{eqnarray*}
In correspondence to this orthogonal decomposition, each bounded
linear operator $C$ on $\mathscr{H}$ is uniquely represented by
the matrix
\begin{eqnarray}
C=\left( \begin{array}{cc} C_{11} & C_{12}\\
C_{21} & C_{22}\end{array}\right)
\end{eqnarray}
where $C_{11}={\rm P}_{+}C{\rm P}_{+}|_{\mathscr{H}_+}\,,\,C_{12}={\rm P}_{+}C{\rm P}_{-}|_{\mathscr{H}_{-}}
\,,\,C_{21}={\rm P}_{-}C{\rm P}_{+}|_{\mathscr{H}_{+}}\,,\,C_{22}={\rm P}_{-}C{\rm P}_{-}|_{\mathscr{H}_{-}}$.
The $n$-dimensional Minkowski space is a well-known example of a Krein space:
\begin{example}
Let $M_n({\mathbb C})$ be the set of all complex $n\times n$
matrices and let $\langle\cdot,\cdot\rangle$ be the standard inner
product on ${\mathbb C}^n$. For selfadjoint involution
\begin{eqnarray*}
J_0=\left( \begin{array}{cc} I_{n-1} & 0\\
0 & -1\end{array}\right),
\end{eqnarray*}
where $I_{n-1}$ denotes the identity of $M_{n-1}(\mathbb{C})$, one
can define an indefinite inner product $[.,.]_{J_0}$ on ${\mathbb
C}^n$ by
\begin{eqnarray*}
[x,y]_{J_0}=\langle J_0x,y\rangle=\sum_{k=1}^{n-1}x_k\bar{y}_k-x_n\bar{y}_n
\end{eqnarray*}
for $x=(x_1,\cdots,x_n)$ , $y=(y_1,\cdots,y_n)\in {\mathbb C}^n$.
The Krein space $({\mathbb C}^n, J_0)$ is called the
$n$-dimensional Minkowski space.
\end{example}
Let $(\mathscr{H}_1,J_1)$ and $(\mathscr{H}_2,J_2)$ be Krein spaces. The $(J_1,J_2)$-adjoint operator
$A^{\sharp}$ of $A\in\mathbb{B}(\mathscr{H}_1,\mathscr{H}_2)$ is defined by
\begin{eqnarray*}
[Ax,y]_{J_2}=[x,A^{\sharp}y]_{J_1}\qquad(x\in\mathscr{H}_1,y\in\mathscr{H}_2),
\end{eqnarray*}
which is equivalent to say that $A^{\sharp}=J_1A^*J_2$. Trivially
$(A^{\sharp})^{\sharp}=A$. An operator
$A\in\mathbb{B}(\mathscr{H})$ on a Krein space $(\mathscr{H},J)$
is said to be $J$-selfadjoint if $A^{\sharp}=A$, or equivalently,
$A=JA^*J$.
The spectrum of a $J$-selfadjoint operator on a Krein space $(\mathscr{H},J)$
is not necessarily real (it can even cover the whole plane); see \cite{Bogner}.\\
For $J$-selfadjoint operators $A$ and $B$, the $J$-order, denoted as $A\leq^{J}B$, is defined by
\begin{eqnarray*}
[Ax,x]_J\leq[Bx,x]_J\qquad(x\in \mathscr{H}).
\end{eqnarray*}
Clearly $A\leq^{J}B$ if and only if $JA\leq JB$. The
$J$-selfadjoint operator $A\in\mathbb{B}(\mathscr{H})$ is said to
be $J$-positive if $A\geq^{J}0$. Evidently, $A$ is $J$-positive if
and only if $AJ$ is positive.
It is easy to see that neither $A\geq0$ implies $A\geq^{J}0$ nor $A\geq^{J}0$ implies $A\geq0$.\\
Let $(\mathscr{H}_1,J_1)$ and $(\mathscr{H}_2,J_2)$ be Krein
spaces. As usual, let $\mathbb{B}(\mathscr{H}_1,\mathscr{H}_2)$ be
the space of all bounded linear operators from $\mathscr{H}_1$
into $\mathscr{H}_2$.  An operator
$C\in\mathbb{B}(\mathscr{H}_1,\mathscr{H}_2)$ is called a
$(J_1,J_2)$-contraction if
$C^{\sharp}C\leq^{J_1}I_{\mathscr{H}_1}$, that is,
$C^{*}J_{2}C\leq J_1$. The operator $C$ is called
$(J_1,J_2)$-bicontraction if $C$ and $C^{\sharp}$ are
$(J_1,J_2)$-contraction and $(J_2,J_1)$-contraction, respectively.
In the case that $\mathscr{H}_1=\mathscr{H}_2$ and $J_1=J_2=J$ we
write $J$-contraction instead of $(J,J)$-contraction. An
invertible operator $U\in\mathbb{B}(\mathscr{H}_1,\mathscr{H}_2)$
such that
$U^{\sharp}=U^{-1}$ is said to be $(J_1,J_2)$-unitary.\\

Note that in contrast to the setting of Hilbert spaces, not all
$J$-contractions are $J$-bicontractions; see \cite[Example
1.3.8]{Rov2}. The following theorem presenting a suitable
condition for a $J$-contraction to being a $J$-bicontraction.
\begin{theorem}\cite[Corollary 3.3.3]{Ando}
A $J$-contraction $C$ on a Krein space $(\mathscr{H},J)$ is a
$J$-bicontraction if and only if the operator $C_{11}$ in the matrix
form (1.1) of $C$ is invertible. In particular if $C$ is an
invertible $J$-contraction, then $C$ is a $J$-bicontraction.
\end{theorem}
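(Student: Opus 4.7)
The plan is to translate both $J$-contraction inequalities into Hilbert-space positive semidefinite block operator matrix conditions with respect to the fundamental decomposition $\mathscr{H}=\mathscr{H}_+\oplus\mathscr{H}_-$ from (1.1), and then isolate invertibility of $C_{11}$ as the critical Schur-complement condition. Since $A\leq^J B$ is equivalent to $JA\leq JB$, the $J$-contraction hypothesis $C^{\sharp}C\leq^J I$ rewrites as the Hilbert-space inequality $J-C^*JC\geq 0$, while the condition that $C^{\sharp}$ is also a $J$-contraction, $CC^{\sharp}\leq^J I$, rewrites as $J-CJC^*\geq 0$. From $(1.1)$ I would compute
\[
C^{\sharp}=JC^*J=\begin{pmatrix} C_{11}^* & -C_{21}^* \\ -C_{12}^* & C_{22}^* \end{pmatrix}
\]
and expand both inequalities into explicit $2\times 2$ block positive operator matrices whose entries are polynomials in the $C_{ij}$ and their adjoints.

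Next, I would apply the standard Schur-complement criterion for positivity of a $2\times 2$ block operator matrix. In each of the expanded inequalities, one diagonal block is automatically of the form $I+(\text{positive})$ and is therefore bounded below and boundedly invertible; the Schur complement with respect to this invertible block is then well defined, and positivity of the full block matrix is equivalent to positivity of the Schur complement on the complementary subspace. Performing this reduction for both $J-C^*JC\geq 0$ and $J-CJC^*\geq 0$ yields two complementary Schur-complement positivity conditions, one living naturally on $\mathscr{H}_-$ and the other on $\mathscr{H}_+$.

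The core of the proof is then to compare these two reduced conditions and isolate the role of $C_{11}$. Assuming $C$ is a $J$-bicontraction, the two complementary Schur-complement identities combine to force both $C_{11}^*C_{11}$ and $C_{11}C_{11}^*$ to be bounded below on $\mathscr{H}_+$, which is equivalent to (bounded) invertibility of $C_{11}$. Conversely, if $C$ is a $J$-contraction with $C_{11}$ invertible, I would use a completion-of-the-squares factorization across the block $C_{11}$ to derive $J-CJC^*\geq 0$ directly from $J-C^*JC\geq 0$. For the \emph{in particular} clause, if $C$ is an invertible $J$-contraction, conjugating $J-C^*JC\geq 0$ by $C^{-1}$ produces the companion inequality $(C^{-1})^*JC^{-1}\geq J$; comparing the block representations of $C$ and $C^{-1}$ under these two constraints yields bounded invertibility of $C_{11}$, and the preceding equivalence then gives the $J$-bicontraction conclusion.

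The main obstacle will be the Schur-complement manipulation in the middle step: in the infinite-dimensional setting one has to track not merely algebraic invertibility of $C_{11}$ but \emph{bounded} invertibility, and this requires careful handling of operator square roots and range-inclusion conditions when factoring the block matrix. The same kind of care is needed in the \emph{in particular} clause to ensure that bounded invertibility of $C$ transfers to bounded invertibility of $C_{11}$.
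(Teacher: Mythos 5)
The paper gives no proof of this theorem --- it is quoted verbatim from Ando's lecture notes --- so there is nothing internal to compare your argument with, and it has to stand on its own. Your general strategy (rewrite both $J$-contraction conditions as positivity of Hilbert-space block matrices relative to $\mathscr{H}_{+}\oplus\mathscr{H}_{-}$ and analyse them by Schur complements) is the standard and appropriate one, and your formula for $C^{\sharp}$ is correct. But the first concrete step already fails: the claim that in $J-C^{*}JC\geq 0$ ``one diagonal block is automatically of the form $I+(\text{positive})$'' is false. With $J=\left(\begin{smallmatrix} I & 0\\ 0 & -I\end{smallmatrix}\right)$ the diagonal blocks are $I+C_{21}^{*}C_{21}-C_{11}^{*}C_{11}$ on $\mathscr{H}_{+}$ and $C_{22}^{*}C_{22}-I-C_{12}^{*}C_{12}$ on $\mathscr{H}_{-}$; neither is invertible a priori (the second is exactly $0$ for $C=\left(\begin{smallmatrix} I&0\\0&S\end{smallmatrix}\right)$ with $S$ the unilateral shift), so the Schur complement you want to form is not available.

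More seriously, your plan targets the wrong corner. Positivity of $J-C^{*}JC$ forces $C_{22}^{*}C_{22}\geq I+C_{12}^{*}C_{12}\geq I$ and only bounds $C_{11}^{*}C_{11}$ from \emph{above}; it is $C_{22}$, the compression to the negative subspace, that is bounded below for every $J$-contraction, and invertibility of $C_{22}$ (not $C_{11}$) is the correct criterion for being a $J$-bicontraction under the labelling fixed in (1.1). Indeed, in the Minkowski plane $(\mathbb{C}^{2},J_{0})$ the operator $C=\left(\begin{smallmatrix}0&0\\0&c\end{smallmatrix}\right)$ with $|c|\geq 1$ is a $J_{0}$-bicontraction with $C_{11}=0$, while $C=\left(\begin{smallmatrix}I&0\\0&S\end{smallmatrix}\right)$ on $\ell^{2}\oplus\ell^{2}$ is a $J$-contraction with $C_{11}=I$ invertible that is not a $J$-bicontraction. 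Hence the goal you set in your third paragraph --- forcing $C_{11}^{*}C_{11}$ and $C_{11}C_{11}^{*}$ to be bounded below --- is unattainable, and carrying out your own block computation honestly would lead you to $C_{22}$ instead; the statement as printed appears to carry over the index from a source using the opposite ordering of $\mathscr{H}_{\pm}$. The ``in particular'' clause is a true classical fact, but your sketch for it inherits the same misidentification: conjugating by $C^{-1}$ yields that $(C^{-1})_{11}$ is bounded below on $\mathscr{H}_{+}$, and genuine additional work is needed to convert this, together with $C_{22}^{*}C_{22}\geq I$, into surjectivity of $C_{22}$.
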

Let $C\in\mathbb{B}(\mathscr{H}_1,\mathscr{H}_2)$. By a defect
operator for $C$ we mean any operator
$E\in\mathbb{B}(\tilde{\mathscr{H}_2},\mathscr{H}_1)$, where
$(\tilde{\mathscr{H}_2},\tilde{J_2})$ is a Krein space, such that
$E$ has zero kernel and
$I_{\mathscr{H}_1}-C^{\sharp}C=EE^{\sharp}$. A Julia operator for
$C$ is a $(J_1\oplus\tilde{J_1}, J_2\oplus\tilde{J_2})$-unitary
$U:\mathscr{H}_1\oplus\tilde{\mathscr{H}_1}\rightarrow\mathscr{H}_2\oplus\tilde{\mathscr{H}_2}$
of the form
\begin{eqnarray*}
U=\left( \begin{array}{cc} C & D\\
E^{\sharp} & -L^{\sharp}\end{array}\right),
\end{eqnarray*}
where $(\tilde{\mathscr{H}_1}, \tilde{J_1})$ and
$(\tilde{\mathscr{H}_2},\tilde{J_2})$ are Krein spaces such that
the operators
$D\in\mathbb{B}(\tilde{\mathscr{H}_1},\mathscr{H}_2)$ and
$E\in\mathbb{B}(\tilde{\mathscr{H}_2},\mathscr{H}_1)$
have zero kernels. In this case $E$ is a defect operator for $C$,
and $D$ is a defect operator for $C^{\sharp}$; cf. \cite{Rov1}.\\

Defect and Julia operators have played important roles in the
Krein space operator theory. The first constructions of these
operators in the Krein space setting are due to Arsene et al
\cite{Arsene}. An abstract theory of Julia operators in Krein
spaces and its applications appear in a number of sources; see
e.g. \cite{Rov1,Rov2,Rov3}. We need the following theorem as a
result of Corollary 1.4.3, Theorem 2.4.5 of \cite{Rov2} and
Theorem 13 of \cite{Rov3}.
\begin{theorem}
Suppose that $(\mathscr{H}_1,J_1)$ and $(\mathscr{H}_2,J_2)$ are
Krein spaces and $C\in\mathbb{B}(\mathscr{H}_1,\mathscr{H}_2)$ is
an injective $(J_1,J_2)$-bicontraction. Then $C$ has a unique (up to unitary)
Julia operator of the form
\begin{eqnarray*}
U=\left( \begin{array}{cc} C & D\\
E^{\sharp} &
-L^{*}\end{array}\right)\in\mathbb{B}(\mathscr{H}_1\oplus\tilde{\mathscr{H}_1},\mathscr{H}_2\oplus\tilde{\mathscr{H}_2}),
\end{eqnarray*}
for which $\tilde{\mathscr{H}_1}$ and $\tilde{\mathscr{H}_2}$ are
Hilbert spaces and $-L^{*}$ is a Hilbert space contraction.
\end{theorem}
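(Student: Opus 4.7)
My strategy is to reduce the statement to the classical Sz.-Nagy--Foia\c{s} dilation theorem for Hilbert-space contractions via the Potapov--Ginzburg transform, and then to translate the resulting unitary dilation back into a Julia operator for $C$. To begin, I would apply Theorem 1.2 (to $C$ and to $C^{\sharp}$) to see that in the block decomposition
\begin{eqnarray*}
C=\left(\begin{array}{cc} C_{11} & C_{12}\\ C_{21} & C_{22}\end{array}\right)
\end{eqnarray*}
associated with the fundamental decompositions $\mathscr{H}_j=\mathscr{H}_{j,+}\oplus\mathscr{H}_{j,-}$, the corner $C_{11}$ is invertible. Invertibility of $C_{11}$ allows me to form the Potapov--Ginzburg transform $T$ of $C$, a bounded linear operator from $\mathscr{H}_1$ to $\mathscr{H}_2$ (viewed as Hilbert spaces) whose Hilbert-space contractivity is equivalent to the $(J_1,J_2)$-bicontractivity of $C$, and whose injectivity is inherited from that of $C$.

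Next, I would invoke the Sz.-Nagy--Foia\c{s} theory for $T$: set $D_T=(I-T^{*}T)^{1/2}$ and $D_{T^{*}}=(I-TT^{*})^{1/2}$, let $\tilde{\mathscr{H}}_1=\overline{{\rm ran}\,D_T}$ and $\tilde{\mathscr{H}}_2=\overline{{\rm ran}\,D_{T^{*}}}$ (both Hilbert spaces), and form the Halmos unitary dilation $V:\mathscr{H}_1\oplus\tilde{\mathscr{H}}_1\to\mathscr{H}_2\oplus\tilde{\mathscr{H}}_2$ with $T$ in the $(1,1)$-corner and $-T^{*}$ in the $(2,2)$-corner; the compressions of the defects to these subspaces are injective, and $-T^{*}$ is a Hilbert-space contraction. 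Finally, I would apply the inverse Potapov--Ginzburg transform to $V$, now regarded with the Krein structure $J_j\oplus I$ on $\mathscr{H}_j\oplus\tilde{\mathscr{H}}_j$. The transform intertwines Hilbert-space unitarity with $(J_1\oplus I,J_2\oplus I)$-unitarity and, up to the usual sign convention, preserves the $(1,1)$- and $(2,2)$-blocks; this produces the required $(J_1\oplus I,J_2\oplus I)$-unitary $U$ with $C$ in position $(1,1)$ and $-L^{*}:=-T^{*}$ in position $(2,2)$. Reading off the $(1,2)$- and $(2,1)$-entries supplies $D$ and $E^{\sharp}$ with zero kernels, which are defect operators for $C^{\sharp}$ and $C$ respectively.

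Uniqueness up to unitary transfers, via the same transform, from the Sz.-Nagy--Foia\c{s} uniqueness of the minimal unitary dilation of a Hilbert-space contraction. The main obstacle is the bookkeeping required to confirm that the Potapov--Ginzburg transform indeed converts $(J_1,J_2)$-bicontractions with invertible $C_{11}$ into Hilbert-space contractions, preserves injectivity, and intertwines the Halmos unitary dilation of $T$ with a Julia operator for $C$ of the advertised block form. These algebraic verifications are routine but highly sign-sensitive, so the conventions tied to $J_1$ and $J_2$ must be tracked with care throughout.
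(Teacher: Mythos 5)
First, a point of comparison: the paper does not prove this statement at all --- Theorem 1.3 is imported from Dritschel and Rovnyak (Corollary 1.4.3 and Theorem 2.4.5 of \cite{Rov2}, Theorem 13 of \cite{Rov3}) --- so your argument has to stand on its own, and as written it has two genuine gaps. The first concerns the pivot of your construction. The Potapov--Ginzburg transform of $C$ is not an operator from $\mathscr{H}_1$ to $\mathscr{H}_2$ ``viewed as Hilbert spaces''; it is a contraction between the interchanged spaces $\mathscr{H}_{1,+}\oplus\mathscr{H}_{2,-}$ and $\mathscr{H}_{2,+}\oplus\mathscr{H}_{1,-}$, obtained by a linear-fractional rearrangement of the four blocks of $C$ relative to the fundamental ($\pm$) decompositions. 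Those decompositions are transversal to the decomposition $\mathscr{H}_j\oplus\tilde{\mathscr{H}_j}$ in which the Julia operator is written, so your central claim --- that the inverse transform of the Halmos dilation of $T$ ``preserves the $(1,1)$- and $(2,2)$-blocks'' and therefore returns $C$ in the corner --- is precisely the assertion that needs proof, and the proposal supplies nothing toward it. Even granting it, you would still have to check that the resulting off-diagonal entries have zero kernel and satisfy $I-C^{\sharp}C=EE^{\sharp}$ and $I-CC^{\sharp}=DD^{\sharp}$ with respect to the \emph{indefinite} adjoints; this does not follow formally from the Hilbert-space defect identities for $T$.

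The second gap is the uniqueness step. A Julia operator is a one-step elementary rotation, not a minimal unitary dilation in the Sz.-Nagy--Foia\c{s} sense, and its essential uniqueness amounts to the essential uniqueness of injective factorizations $I-C^{\sharp}C=EE^{\sharp}$ over Hilbert defect spaces --- a statement that is delicate in the Krein setting (it can fail when the defect spaces are allowed to be indefinite) and is not delivered by the uniqueness of the minimal unitary dilation of $T$. A more economical route, essentially the one in the cited sources, avoids the Potapov--Ginzburg transform entirely: bicontractivity gives $J_1-C^{*}J_2C\geq 0$ and $J_2-J_2CJ_1C^{*}J_2\geq 0$ as Hilbert-space operators, so one takes their positive square roots, restricts to the closures of their ranges to obtain Hilbert defect spaces and injective defect operators directly (e.g. $E=J_1(J_1-C^{*}J_2C)^{1/2}$ on $\overline{{\rm ran}\,(J_1-C^{*}J_2C)^{1/2}}$), defines the link operator $L$ by the intertwining identity, and verifies unitarity of $U$ by block computation; uniqueness then reduces to the elementary fact that two injective Hilbert-space factorizations of the same positive operator differ by a unitary of the defect spaces. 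I would encourage you to rework the proof along these lines, or else to carry out in full the sign- and block-bookkeeping your transform-based plan defers.
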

A real valued continuous function $f$ on an interval $\mathcal{I}\subseteq\mathbb{R}$ is said to be operator convex if
\begin{eqnarray*}
f\Big((1-\lambda)A+\lambda B\Big)\leq (1-\lambda)f(A)+\lambda
f(B),
\end{eqnarray*}
for all $\lambda\in [0,1]$ and all selfadjoint operators $A$ and
$B$ on a Hilbert space $\mathscr{H}$, whose spectra are contained
in $\mathcal{I}$. The notion of operator/matrix convex functions, as well as that of operator/matrix
monotone functions, have played a vital role in operator/matrix analysis and its applications,
e.g., to quantum information. On the other hand, the operator theory in spaces
with an indefinite inner product (notably in Pontryagin spaces and Krein spaces) has
been investigated for a long time with the aim to establish mathematical formalism of
quantum field theory. An indefinite analogue of the concept of monotone matrix function was studied in \cite{ALP}.\\
In this paper we introduce the notion of Krein-operator convexity in the setting
of Krein spaces. We also present an
indefinite version of Jensen's operator inequality based on the
ideas due to Hansen and Pedersen \cite{Hansen}.


\section{Main results}
Let $A$ be a $J$-selfadjoint operator on a Krein space
$(\mathscr{H},J)$ and let $\mathcal{U}$ be an open set which is
not necessarily connected such that
$\sigma(A)\subseteq\mathcal{U}$. Suppose that
$f:\mathcal{U}\rightarrow\mathbb{C}$ is an analytic function. Then
the operator $f(A)$ is defined by the usual Dunford-Riesz integral
\begin{eqnarray}
f(A)=\frac{1}{2\pi i}\int_{\Gamma}f(\lambda)(\lambda I_{\mathscr{H}}-A)^{-1}d\lambda,
\end{eqnarray}
where $\Gamma$ is a suitable finite family of closed rectifiable contours with positive direction surrounding
$\sigma(A)$ in its interior; see \cite{Rudin}\\
From now on assume that $\mathcal{U}$ is an open set (not
necessarily connected) in the plane, which is symmetric with
respect to real axis and $\mathcal{U}\cap\mathbb{R}$ consists of a
segment of real axis. The following proposition provide some
conditions for $f(A)$ to be $J$-selfadjoint whenever $A$ is
$J$-selfadjoint.
\begin{proposition}
Let $A$ be a $J$-selfadjoint operator on a Krein space
$(\mathscr{H},J)$ such that $\sigma(A)\subseteq\mathcal{U}$. If
$f:\mathcal{U}\rightarrow\mathbb{C}$ is an analytic function such
that $f(x)$ is real for all $x\in\mathcal{U}\cap\mathbb{R}$, then
$f(A)$ is $J$-selfadjoint.
\end{proposition}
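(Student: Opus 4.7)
The plan is to combine the Dunford--Riesz integral formula (2.1) with the Schwarz reflection principle. Since $A$ is $J$-selfadjoint, the statement $f(A)^{\sharp}=f(A)$ is equivalent to $Jf(A)^{*}J=f(A)$; I will prove the latter by taking the $\ast$-adjoint inside the integral and then conjugating by $J$. A preliminary remark is that $\sigma(A)$ is symmetric about the real axis, because $A^{*}=JAJ$ forces $\sigma(A)=\sigma(A^{*})=\overline{\sigma(A)}$; combined with the symmetry of $\mathcal{U}$ about $\mathbb{R}$, this lets me choose the contour $\Gamma$ in (2.1) to be symmetric about the real axis with the usual positive orientation.

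Next I compute $f(A)^{*}$ by distributing the adjoint under the integral. The scalar $\overline{1/(2\pi i)}=-1/(2\pi i)$, the change of variable $\mu=\bar{\lambda}$, and the fact that reflection in $\mathbb{R}$ reverses the orientation of the (symmetric) contour $\Gamma$ together yield
\begin{eqnarray*}
f(A)^{*}=\frac{1}{2\pi i}\int_{\Gamma}\overline{f(\bar{\mu})}\,(\mu I_{\mathscr{H}}-A^{*})^{-1}\,d\mu.
\end{eqnarray*}
Conjugating by $J$ and using $J(\mu I_{\mathscr{H}}-A^{*})^{-1}J=(\mu I_{\mathscr{H}}-JA^{*}J)^{-1}=(\mu I_{\mathscr{H}}-A)^{-1}$ gives
\begin{eqnarray*}
Jf(A)^{*}J=\frac{1}{2\pi i}\int_{\Gamma}\overline{f(\bar{\mu})}\,(\mu I_{\mathscr{H}}-A)^{-1}\,d\mu.
\end{eqnarray*}

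It remains to identify the integrand with that of $f(A)$, which is where Schwarz reflection enters. I set $g(\mu):=\overline{f(\bar{\mu})}$; this is analytic on $\mathcal{U}$ because $\mathcal{U}$ is symmetric about $\mathbb{R}$. The hypothesis that $f$ is real on $\mathcal{U}\cap\mathbb{R}$ forces $g=f$ on that segment, and since the segment has interior points the identity theorem extends $g=f$ to the whole connected component of $\mathcal{U}$ that contains it. Thus $\overline{f(\bar{\mu})}=f(\mu)$ on a neighborhood of $\sigma(A)$ enclosed by $\Gamma$, and the last integral equals $f(A)$, proving $Jf(A)^{*}J=f(A)$.

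The main delicate point is the orientation bookkeeping together with verifying that the Schwarz reflection identity applies on an open region containing all of $\sigma(A)$. The standing hypotheses---that $\mathcal{U}$ is symmetric about $\mathbb{R}$ and that $\mathcal{U}\cap\mathbb{R}$ is a non-degenerate segment---are tailored precisely to make the identity theorem available, so once the contour calculation is performed carefully the remainder is essentially a matter of routine substitution.
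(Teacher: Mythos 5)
Your proof is correct and follows essentially the same route as the paper's: both rely on the Dunford--Riesz integral over a contour symmetric about $\mathbb{R}$, the conjugate change of variable with its orientation reversal, and the reflection identity $\overline{f(\bar{z})}=f(z)$. The only differences are cosmetic --- you verify $Jf(A)^{*}J=f(A)$ in a single computation where the paper separately establishes $Jf(A)=f(A^{*})J$ and $f(A)^{*}=f(A^{*})$ before combining them, and you derive the symmetry of $\sigma(A)$ directly from $A^{*}=JAJ$ rather than citing it.
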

\begin{proof}
By the definition of a $J$-selfadjoint operator, $AJ=JA^*$. Hence
\begin{eqnarray}
Jf(A)&=&\frac{1}{2\pi i}\int_{\Gamma}f(\lambda)J(\lambda I_{\mathscr{H}}-A)^{-1}d\lambda\nonumber\\
&=&\frac{1}{2\pi i}\int_{\Gamma}f(\lambda)(\lambda J-AJ)^{-1}d\lambda\nonumber\\
&=&\frac{1}{2\pi i}\int_{\Gamma}f(\lambda)\Big(J(\lambda I_{\mathscr{H}}-A^*)\Big)^{-1}d\lambda\nonumber\\
&=&\frac{1}{2\pi i}\int_{\Gamma}f(\lambda)(\lambda I_{\mathscr{H}}-A^*)^{-1}Jd\lambda\nonumber\\
&=&f(A^*)J.
\end{eqnarray}
The spectrum of a $J$-selfadjoint operator on a Krein space
$(\mathscr{H},J)$ is symmetric with respect to the real axis (see
\cite[Corollary 6.3]{Bogner}). Suppose that $\Gamma$ is a closed
rectifiable contour with positive direction surrounding
$\sigma(A)$ in its interior such that it is symmetric with respect
to the real axis. By the change of variable $\lambda$ to
$\bar{\lambda}$, we have
\begin{eqnarray*}
f(A)=\frac{1}{2\pi
i}\int_{\Gamma^{-1}}f(\bar{\lambda})(\bar{\lambda}I_{\mathscr{H}}-A)^{-1}d\bar{\lambda}=-\frac{1}{2\pi
i}\int_{\Gamma}f(\bar{\lambda})(\bar{\lambda}I_{\mathscr{H}}-A)^{-1}d\bar{\lambda}.
\end{eqnarray*}
By the reflection principle, we have $\overline{f(\bar{z})}=f(z)$ for all $z\in\Gamma$. Therefore
\begin{eqnarray*}
(f(A))^*&=&\Big(-\frac{1}{2\pi i}\int_{\Gamma}f(\bar{\lambda})(\bar{\lambda}I_{\mathscr{H}}-A)^{-1}d\bar{\lambda}\Big)^*\nonumber\\
&=&\frac{1}{2\pi i}\int_{\Gamma}\overline{f(\bar{\lambda})}(\lambda I_{\mathscr{H}}-A^*)^{-1}d\lambda\nonumber\\
&=&\frac{1}{2\pi i}\int_{\Gamma}f(\lambda)(\lambda I_{\mathscr{H}}-A^*)^{-1}d\lambda\nonumber\\
&=&f(A^*).
\end{eqnarray*}
It follows from (2.2) that $Jf(A)=f(A^*)J=f(A)^*J$. Hence $f(A)$ is $J$-selfadjoint.\\
\end{proof}
The notion of operator convexity is a generalization of that of
usual convexity. This is based on the fact that the selfadjoint
operators (Hermitian matrices) can be regarded as a generalization
of the real numbers. We aim to introduce the notion of Krein-operator convexity
as a generalization of the operator convexity.\\
One may immediately say that a real valued function $f$ being
analytic on an interval $\mathcal{I}$ is Krein-operator convex if
\begin{eqnarray*}
f\Big((1-\lambda)A+\lambda B\Big)\leq^{J} (1-\lambda)f(A)+\lambda f(B),
\end{eqnarray*}
for all $\lambda\in [0,1]$ and all $J$-selfadjoint operators $A$ and $B$ on any Krein space $(\mathscr{H},J)$ with spectra contained in $\mathcal{I}$.
As noticed by Ando \cite{Ando1}, this definition is vain. In fact if $\mathscr{H}=\mathscr{H}_+\oplus\mathscr{H}_-$ and
$J=\left( \begin{array}{cc} I_{\mathscr{H}_+}& 0\\
0 & -I_{\mathscr{H}_-}\end{array}\right)$ with respect to this
decomposition, $\alpha,\beta\in\mathcal{I}$ and consider $A=\alpha
I_{\mathscr{H}}$ and $B=\beta I_{\mathscr{H}}$, then $A,B$ are $J$-selfadjoint (selfadjoint)
and $\sigma(A)=\{\alpha\}\subseteq\mathcal{I}$ and
$\sigma(B)=\{\beta\}\subseteq\mathcal{I}$. Hence
\begin{eqnarray*}
f\Big((1-\lambda)A+\lambda B\Big)=f\Big((1-\lambda)\alpha+\lambda\beta\Big)I_{\mathscr{H}}.
\end{eqnarray*}
and
\begin{eqnarray*}
(1-\lambda)f(A)+\lambda f(B)=\Big((1-\lambda)f(\alpha)+\lambda f(\beta)\Big)I_{\mathscr{H}},
\end{eqnarray*}
for all $\lambda\in[0,1]$. Therefore $f\Big((1-\lambda)A+\lambda B\Big)\leq^{J} (1-\lambda)f(A)+\lambda f(B)$ whence
\begin{eqnarray*}
\Big((1-\lambda)f(\alpha)+\lambda
f(\beta)\Big)-f\Big((1-\lambda)\alpha+\lambda\beta\big)\Big)\geq 0
\end{eqnarray*}
and
\begin{eqnarray*}
\Big((1-\lambda)f(\alpha)+\lambda f(\beta)\Big)-f\Big((1-\lambda)\alpha+\lambda\beta\big)\Big)\leq 0.
\end{eqnarray*}
Therefore
\begin{eqnarray*}
(1-\lambda)f(\alpha)+\lambda f(\beta)=f\Big((1-\lambda)\alpha+\lambda\beta\Big),
\end{eqnarray*}
for all $\alpha,\beta\in\mathcal{I}$ an all $\lambda\in[0,1]$.
Thus $f$ is linear on $\mathcal{I}$, that is, $f(t)=at+b$ for some $a,b\in\mathbb{R}$.\\
To avoid such trivialities, we restrict ourselves to the $J$-positive operators instead of $J$-selfadjoint operators.
It is well known that the spectrum of a $J$-positive operator on a
Krein space $(\mathscr{H},J)$ is real (see \cite[Theorem
2.1]{Ando}). From this point of view, the $J$-positive operators
seem to behave like the selfadjoint operators on Hilbert spaces.
The following definition seems to be satisfactory.
\begin{definition}
Suppose that $f:\mathcal{U}\rightarrow\mathbb{C}$ is an analytic function
such that $f(x)$ is real for all $x\in\mathcal{U}\cap\mathbb{R}$.
Then $f$ is said to be Krein-operator convex if
\begin{eqnarray}
f\Big((1-\lambda)A+\lambda B\Big)\leq^{J} (1-\lambda)f(A)+\lambda
f(B),
\end{eqnarray}
for all $\lambda\in [0,1]$ and all $J$-positive operators $A$ and
$B$ on any Krein space $(\mathscr{H},J)$, such that spectra of
$A$, $B$ and
$(1-\lambda)A+\lambda B$ are contained in $\mathcal{U}$.\\
Also, the condition (2.3) can be obviously replaced by
\begin{eqnarray*}
f\left(\frac{A+B}{2}\right)\leq^{J}\frac{f(A)+f(B)}{2}.
\end{eqnarray*}
\end{definition}
The following example shows that operator convex functions are not necessarily Krein-operator convex.
\begin{example}
Consider the $2$-dimensional Minkowski space $(\mathbb{C}^2,J_0)$ with $J_0=\left( \begin{array}{cc}1 & 0\\
0 & -1\end{array}\right)$. Let $A=\left( \begin{array}{cc} a_{11} & a_{12}\\
a_{21} & a_{22}\end{array}\right)\in M_2(\mathbb{C})$. Since the
$J_0$-selfadjoitness of $A$ is equivalent to the usual
selfadjointness of $J_0A$,
we have $A=\left( \begin{array}{cc} a_{11} & a_{12}\\
-\overline{a_{12}} & a_{22}\end{array}\right)$ in which $a_{11}$ and $a_{22}$ are real.
Let $A=\left( \begin{array}{cc} 1 & -1\\
1 & -2\end{array}\right)$ and $B=\left( \begin{array}{cc}1 & -1\\
1 & -3\end{array}\right)$. It is easy to see that $A$ and $B$ are
$J_0$-positive and they have real eigenvalues. Also we have
\begin{eqnarray*}
J_0\left(\frac{A^2+B^2}{2}\right)-J_0\left(\frac{A+B}{2}\right)^2=\left( \begin{array}{cc}0 & 0\\
0 & -\frac{1}{4}\end{array}\right)\ngeq0.
\end{eqnarray*}
It shows that
\begin{eqnarray*}
\left(\frac{A+B}{2}\right)^2\nleq^{J_0}\frac{A^2+B^2}{2}.
\end{eqnarray*}
Therefore $f(t)=t^2$ is not Krein-operator convex, while it is
well known that, this function is operator convex on $\mathbb{R}$
(see \cite[p. 8]{Mond}).
\end{example}
The following lemma characterize the invertible $J$-positive
operators on a Krein space $(\mathscr{H},J)$. For more information
of $J$-positivite operators see \cite[Chapter 2]{Ando}.
\begin{proposition}
Let $(\mathscr{H},J)$ be a Krein space and
$A\in\mathbb{B}(\mathscr{H})$. Then $A$ is $J$-positive and
invertible if and only if it is of the form $A=J\tilde{A}$ for
some $\tilde{A}>0$.
\end{proposition}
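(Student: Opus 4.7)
The plan is to exploit the involution identity $J^{2}=I_{\mathscr{H}}$ together with the observation (recorded in the preliminaries) that $A\geq^{J}0$ is equivalent to $JA\geq 0$, in order to convert the statement into the purely Hilbert-space assertion that a positive invertible operator admits a $J$-twisted factorization of the required form. The whole argument is then just multiplying by $J$ on the appropriate side.

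For the direction ($\Rightarrow$), I would set $\tilde{A}:=JA$ and verify two things. First, $\tilde{A}$ is positive: by the very definition of $J$-positivity, $\langle \tilde{A}x,x\rangle=[Ax,x]_{J}\geq 0$ for every $x\in\mathscr{H}$, while selfadjointness of $\tilde{A}$ follows from $A$ being $J$-selfadjoint, since $(JA)^{*}=A^{*}J^{*}=A^{*}J=JA$ using $A^{\sharp}=A$. Second, $\tilde{A}$ is invertible because both $J$ and $A$ are. The identity $J\tilde{A}=J^{2}A=A$ then recovers $A$ in the desired form, and strict positivity $\tilde{A}>0$ follows from positivity together with invertibility.

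For the direction ($\Leftarrow$), given $A=J\tilde{A}$ with $\tilde{A}>0$, I would first check $J$-selfadjointness by the direct computation $JA^{*}J=J(\tilde{A}J)J=J\tilde{A}=A$, then observe that $JA=\tilde{A}\geq 0$ yields $J$-positivity via the characterization in the preliminaries. Invertibility of $A$ is immediate from that of $J$ and $\tilde{A}$.

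I do not anticipate any genuine obstacle here: the argument is essentially a bookkeeping translation between $J$-positivity of $A$ and ordinary positivity of $JA$, made possible by $J^{2}=I_{\mathscr{H}}$. The only subtlety worth flagging is that, in the forward direction, selfadjointness of $\tilde{A}=JA$ must be extracted from $J$-selfadjointness of $A$ before the condition $\tilde{A}>0$ even makes sense, and in the reverse direction one should not omit checking $J$-selfadjointness explicitly, because the order $\leq^{J}$ is a priori defined only between $J$-selfadjoint operators.
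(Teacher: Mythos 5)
Your proof is correct and follows essentially the same route as the paper: set $\tilde{A}=JA$, use $J^{2}=I_{\mathscr{H}}$ and the equivalence of $A\geq^{J}0$ with $JA\geq 0$, and multiply by $J$ to pass between the two statements. You are somewhat more careful than the paper in explicitly verifying selfadjointness of $JA$ and $J$-selfadjointness of $J\tilde{A}$, but the argument is the same.
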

\begin{proof}
Let $A=J\tilde{A}$ with $\tilde{A}>0$. Then $A$ is invertible and $JA=J(J\tilde{A})=\tilde{A}\geq 0$.
It follows that $A$ is $J$-positive and invertible.\\
Conversely, if $A$ is $J$-positive and invertible, then
$A=J\tilde{A}$ with $\tilde{A}:=JA$ which is positive and
invertible.
\end{proof}
\begin{example} \cite{Ando1}
Define the function $f(z)=\frac{1}{z}$ ($z\neq0$) and $f(0)=0$.
Then $f$ is Krein-operator convex on
$\mathcal{U}=\mathbb{C}\setminus\{0\}$. To see this, assume that
$A$ and $B$ are invertible $J$-positive operators such that the
spectra of $A$, $B$ and $(1-\lambda)A+\lambda B$ are contained in
$\mathcal{U}\cap\mathbb{R}=\mathbb{R}\setminus\{0\}$. Using the
Dunford-Riesz representation (2.1), $\Big((1-\lambda)A+\lambda
B)\Big)^{-1}$ exists. By Proposition 2.4, $A=J\tilde{A}$ and
$B=J\tilde{B}$ for some $\tilde{A}>0$ and $\tilde{B}>0$. The
restriction of $f$ to $(0,\infty)$ is operator convex \cite[p.
8]{Mond}. Therefore
\begin{eqnarray*}
\Big((1-\lambda)A+\lambda B\Big)^{-1}J&=&\left((1-\lambda)\tilde{A}+\lambda \tilde{B}\right)^{-1}\nonumber\\
&\leq&(1-\lambda)\tilde{A}^{-1}+\lambda\tilde{B}^{-1}\nonumber\\
&=&\Big((1-\lambda)A^{-1}+\lambda B^{-1}\Big)J.\nonumber
\end{eqnarray*}
Hence
\begin{eqnarray*}
\Big((1-\lambda)A+\lambda B\Big)^{-1}\leq^{J}(1-\lambda)A^{-1}+\lambda B^{-1}.
\end{eqnarray*}
\end{example}

Generally, suppose that $\mathcal{U}$ contains an interval
$\mathcal{I}\subseteq[0,\infty)$ and $f$ is a Krein-operator
convex
function on $\mathcal{U}$. Let $\tilde{A}=\left( \begin{array}{cc} A & 0\\
0& 0\end{array}\right)$ and $\tilde{B}=\left( \begin{array}{cc} B & 0\\
0& 0\end{array}\right)$, for which $A$ and $B$ are positive
operators on a Hilbert space $\mathscr{H}$. Then
\begin{eqnarray*}
f\Big((1-\lambda)\tilde{A}+\lambda\tilde{B}\Big)\leq^{J}(1-\lambda)f(\tilde{A})+\lambda
f(\tilde{B})
\end{eqnarray*}
with $J=\left( \begin{array}{cc} I_{\mathscr{H}} & 0\\
0& -I_{\mathscr{H}}\end{array}\right)$ for all $\lambda\in[0,1]$.
It follows that $f\big((1-\lambda)A+\lambda
B\big)\leq(1-\lambda)f(A)+\lambda f(B)$ for all positive operators
$A$ and $B$ with spectra in $\mathcal{I}$ and all $\lambda\in[0,1]$. Hence the restriction of
$f$ to $\mathcal{I}$ is operator convex. Thus under mild condition,
when the Krein space happens to be a Hilbert space, the definition
of a Krein-operator convex function then coincids with the
traditional definition, when we restricted
ourself to the class of positive operators.\\

Before we present our main result, we give a lemma needed later.
\begin{lemma}
Let $(\mathscr{H},J)$ be a Krein space and $A$ be a $J$-selfadjoint operator with $\sigma(A)\subseteq\mathcal{U}$. Then
\begin{eqnarray*}
f(U^{\sharp}AU)=U^{\sharp}f(A)U
\end{eqnarray*}
for any analytic function $f:\mathcal{U}\rightarrow\mathbb{C}$ and any $J$-unitary $U$.
\end{lemma}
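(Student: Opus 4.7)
The plan is to compute $f(U^{\sharp}AU)$ directly from the Dunford--Riesz representation (2.1) and to move the conjugating factors $U^{\sharp}$ and $U$ outside of the contour integral. To do this I first need to check that the same contour $\Gamma$ surrounding $\sigma(A)$ also surrounds $\sigma(U^{\sharp}AU)$, and then that the resolvents of the two operators are related by $U^{\sharp}$-conjugation.

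The spectral check is immediate: since $U$ is $J$-unitary we have $U^{\sharp}=U^{-1}$, so $U^{\sharp}AU$ is similar to $A$ and hence
\begin{eqnarray*}
\sigma(U^{\sharp}AU)=\sigma(A)\subseteq\mathcal{U}.
\end{eqnarray*}
In particular $f(U^{\sharp}AU)$ is defined by (2.1) on the very same admissible contour $\Gamma$ used for $f(A)$. For the resolvent identity I would write, using $U^{\sharp}U=UU^{\sharp}=I_{\mathscr{H}}$,
\begin{eqnarray*}
\lambda I_{\mathscr{H}}-U^{\sharp}AU=U^{\sharp}(\lambda I_{\mathscr{H}}-A)U,
\end{eqnarray*}
which gives $(\lambda I_{\mathscr{H}}-U^{\sharp}AU)^{-1}=U^{\sharp}(\lambda I_{\mathscr{H}}-A)^{-1}U$ for every $\lambda\in\Gamma$.

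Plugging this into (2.1) and pulling the bounded operators $U^{\sharp}$ and $U$ through the Bochner-type integral (they are independent of $\lambda$) gives
\begin{eqnarray*}
f(U^{\sharp}AU)
&=&\frac{1}{2\pi i}\int_{\Gamma}f(\lambda)\,U^{\sharp}(\lambda I_{\mathscr{H}}-A)^{-1}U\,d\lambda\\
&=&U^{\sharp}\left(\frac{1}{2\pi i}\int_{\Gamma}f(\lambda)(\lambda I_{\mathscr{H}}-A)^{-1}d\lambda\right)U\\
&=&U^{\sharp}f(A)U,
\end{eqnarray*}
which is the desired identity. There is no real obstacle here; the only points that require a moment of care are the spectral equality $\sigma(U^{\sharp}AU)=\sigma(A)$ (so that one contour works for both functional calculi) and the legitimacy of commuting the constant operators with the contour integral, both of which are standard. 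Note that $J$-selfadjointness of $A$ is not actually used in the argument; only $\sigma(A)\subseteq\mathcal{U}$ and the $J$-unitarity of $U$ enter.
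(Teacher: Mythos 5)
Your proof is correct and follows essentially the same route as the paper's: conjugate the resolvent via $\lambda I_{\mathscr{H}}-U^{\sharp}AU=U^{\sharp}(\lambda I_{\mathscr{H}}-A)U$ and pull the constant operators out of the Dunford--Riesz integral. Your spectral step is in fact slightly cleaner (direct similarity via $U^{\sharp}=U^{-1}$, rather than the paper's $\sigma(XY)\cup\{0\}=\sigma(YX)\cup\{0\}$ argument), and your observation that $J$-selfadjointness of $A$ is not needed is accurate; the paper verifies it only because it is used later in Theorem 2.7.
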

\begin{proof}
The operator $U^{\sharp}AU$ is $J$-selfadjoint, since
\begin{eqnarray*}
(U^{\sharp}AU)^{\sharp}&=&J(U^{\sharp}AU)^*J=J(U^*A^*JUJ)J=JU^*A^*JU=JU^*JAU=U^{\sharp}AU,
\end{eqnarray*}
where we use the $J$-selfadjointness of $A$ at the fourth
equality. By the elementary operator theory, if $A$ is some
invertible operator, then
$\sigma(U^{\sharp}AU)\cup\{0\}=\sigma(AUU^{\sharp})\cup\{0\}=\sigma(A)\cup\{0\}$,
and since $U^{\sharp}AU$ is invertible if $A$ is, it follows that
these operators have the same spectra in this case. Finally
\begin{eqnarray}
f(U^{\sharp}AU)&=&\frac{1}{2\pi i}\int_{\Gamma}f(\lambda)(\lambda I_{\mathscr{H}}-U^{\sharp}AU)^{-1}d\lambda\nonumber\\
&=&\frac{1}{2\pi i}\int_{\Gamma}f(\lambda)U^{\sharp}(\lambda I_{\mathscr{H}}-A)^{-1}Ud\lambda\nonumber\\
&=&U^{\sharp}f(A)U.\nonumber
\end{eqnarray}
\end{proof}

In the following theorem we present a Jensen type inequality for
Krein-operator convex functions in the setting of Krein spaces.
\begin{theorem}
Let $f$ be a Krein-operator convex function on $\mathcal{U}$ and
$f(0)=0$. Then
\begin{eqnarray}
f(C^{\sharp}AC)\leq^{J}C^{\sharp}f(A)C
\end{eqnarray}
for all $J$-positive operators $A$  and all invertible
$J$-contractions $C$ on a Krein space $(\mathscr{H},J)$ such that
the spectra of $A$, $C^{\sharp}AC$ and $D^{\sharp}AD$ are
contained in $\mathcal{U}$, where $D$ is a defect operator for
$C^{\sharp}$.
\end{theorem}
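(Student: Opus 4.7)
The plan is to adapt the Hansen--Pedersen dilation argument \cite{Hansen} to the Krein-space setting. In the Hilbert case one dilates $C$ to a unitary using a defect operator, extends $A$ to $\tilde A=\mathrm{diag}(A,0)$, averages two unitary conjugates of $\tilde A$ so that the off-diagonal blocks cancel, applies midpoint operator convexity, and uses $f(0)=0$ to collapse the dilation of $f(A)$. The Krein-space substitute for the Halmos unitary is the Julia operator of Theorem~1.3: since the invertible $J$-contraction $C$ is a $J$-bicontraction by Theorem~1.2 and is certainly injective, Theorem~1.3 furnishes Hilbert spaces $\tilde{\mathscr{H}}_{1}$ and $\tilde{\mathscr{H}}_{2}$ and a $(J\oplus I,\,J\oplus I)$-unitary
\[
U=\left(\begin{array}{cc} C & D \\ E^{\sharp} & -L^{*}\end{array}\right):\mathscr{H}\oplus\tilde{\mathscr{H}}_{1}\longrightarrow\mathscr{H}\oplus\tilde{\mathscr{H}}_{2},
\]
in which $D$ is a defect operator for $C^{\sharp}$.

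Write $\mathcal{J}=J\oplus I$ and set $\tilde A=\left(\begin{array}{cc} A & 0 \\ 0 & 0\end{array}\right)$, which is $\mathcal{J}$-positive with $\sigma(\tilde A)\subseteq\sigma(A)\cup\{0\}\subseteq\mathcal{U}$ (note that $0\in\mathcal{U}$ since $f(0)=0$ is assumed). A direct check shows that $V=\left(\begin{array}{cc} I & 0 \\ 0 & -I\end{array}\right)$ satisfies $V^{\sharp}=\mathcal{J}V^{*}\mathcal{J}=V=V^{-1}$, so $V$ is $\mathcal{J}$-unitary. Block multiplication then gives
\[
U^{\sharp}\tilde A U=\left(\begin{array}{cc} C^{\sharp}AC & C^{\sharp}AD \\ D^{\sharp}AC & D^{\sharp}AD\end{array}\right),\qquad V^{\sharp}U^{\sharp}\tilde A UV=\left(\begin{array}{cc} C^{\sharp}AC & -C^{\sharp}AD \\ -D^{\sharp}AC & D^{\sharp}AD\end{array}\right),
\]
whose midpoint is the $\mathcal{J}$-positive block-diagonal operator $\mathrm{diag}(C^{\sharp}AC,D^{\sharp}AD)$. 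Both conjugates share the spectrum $\sigma(\tilde A)\subseteq\mathcal{U}$, and the midpoint has spectrum $\sigma(C^{\sharp}AC)\cup\sigma(D^{\sharp}AD)\subseteq\mathcal{U}$ by hypothesis, so the midpoint form of Krein-operator convexity (Definition~2.2) is applicable on the auxiliary Krein space $(\mathscr{H}\oplus\tilde{\mathscr{H}}_{1},\mathcal{J})$. Because $f(0)=0$ one has $f(\tilde A)=\mathrm{diag}(f(A),0)$, and the Dunford--Riesz computation of Lemma~2.6 (which uses only $U^{\sharp}=U^{-1}$) yields $f(U^{\sharp}\tilde A U)=U^{\sharp}f(\tilde A)U$ and the analogous identity after further conjugation by $V$. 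Substituting these into the Krein-operator convexity inequality reduces everything to
\[
\left(\begin{array}{cc} f(C^{\sharp}AC) & 0 \\ 0 & f(D^{\sharp}AD)\end{array}\right)\leq^{\mathcal{J}}\left(\begin{array}{cc} C^{\sharp}f(A)C & 0 \\ 0 & D^{\sharp}f(A)D\end{array}\right),
\]
and evaluating the form $[\cdot,\cdot]_{\mathcal{J}}$ on vectors of the form $(x,0)\in\mathscr{H}\oplus\tilde{\mathscr{H}}_{1}$ extracts the $(1,1)$-block, giving the desired inequality $f(C^{\sharp}AC)\leq^{J}C^{\sharp}f(A)C$.

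The main obstacle is not a single step but the bookkeeping required to verify that every operator fed into the definition of Krein-operator convexity meets its hypotheses. One must check that $U^{\sharp}\tilde AU$ and $V^{\sharp}U^{\sharp}\tilde AUV$ are $\mathcal{J}$-positive (which follows from $[\tilde A U\xi,U\xi]_{\mathcal{J}}\geq 0$ combined with $U^{\sharp}=U^{-1}$) and that their spectra, together with that of the midpoint, lie in $\mathcal{U}$, for which the assumption on $\sigma(D^{\sharp}AD)$ is indispensable. A minor technical extension is that Lemma~2.6 is stated for $J$-unitaries on a single Krein space, whereas $U$ goes between two Krein spaces; however, its Dunford--Riesz proof carries over verbatim since only the identity $U^{\sharp}=U^{-1}$ is used. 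The role of the hypothesis $f(0)=0$ is precisely to annihilate the second diagonal block of $f(\tilde A)$, so that the conjugates of $f(\tilde A)$ assemble into a block matrix whose $(1,1)$-entry is $C^{\sharp}f(A)C$ and the $(1,1)$-reading of the final inequality is non-trivial.
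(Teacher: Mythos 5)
Your proposal is correct and follows essentially the same route as the paper: dilating $C$ to its Julia operator via Theorems~1.2 and~1.3, forming $\tilde A=\mathrm{diag}(A,0)$, averaging the two conjugates so the off-diagonal blocks cancel, applying midpoint Krein-operator convexity together with Lemma~2.6 and $f(0)=0$, and reading off the $(1,1)$ block. The only cosmetic difference is that you conjugate by the signature operator $\mathrm{diag}(I,-I)$ separately, whereas the paper absorbs it into a second unitary $V=U(I\oplus(-I))$ --- the same computation; your extra remarks on $\mathcal{J}$-positivity, the spectral hypotheses, and the two-space version of Lemma~2.6 only make explicit what the paper leaves implicit.
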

\begin{proof}
Let $f:\mathcal{U}\rightarrow\mathbb{C}$ be a Krein-operator
convex function and let $A$ be a $J$-positive operator with
$\sigma(A)\subseteq\mathcal{U}$. Also assume that $C$ is an
invertible $J$-contraction. By Theorem 1.2, $C$ is a
$J$-bicontraction. Therefore Theorem 1.3 implies that $C$ has a
unique Julia operator
\begin{eqnarray*}
U=\left( \begin{array}{cc} C & D\\
E^{\sharp} & -L^*\end{array}\right)\in\mathbb{B}(\mathscr{H}\oplus\tilde{\mathscr{H}_1},\mathscr{H}\oplus\tilde{\mathscr{H}_2}),
\end{eqnarray*}
where $\tilde{\mathscr{H}_1}$ and $\tilde{\mathscr{H}_2}$ are Hilbert spaces
such that the operators $D\in\mathbb{B}(\tilde{\mathscr{H}_1},\mathscr{H})$
and $E\in\mathbb{B}(\tilde{\mathscr{H}_2},\mathscr{H})$ have zero kernels
and $-L^{*}$ is a Hilbert space contraction. It is well known that
$\mathscr{H}\oplus\tilde{\mathscr{H}_i}$ are Krein spaces with fundamental symmetries
${\bf\tilde{J}_i}=J\oplus I_{\tilde{\mathscr{H}_i}}=\left( \begin{array}{cc} J & 0\\
0 & I_{\tilde{\mathscr{H}_i}}\end{array}\right)$ for $i=1,2$. Then
$U$ is a $({\bf\tilde{J}_1},{\bf\tilde{J}_2})$-unitary i.e.
$U^{\sharp}U=UU^{\sharp}=I_{\mathscr{H}\oplus\tilde{\mathscr{H}_1}}$.
Let
\begin{eqnarray*}
V=\left( \begin{array}{cc} C & -D\\
E^{\sharp} & L^*\end{array}\right).
\end{eqnarray*}
Note that $V$ is simply $U$ multiplied by fundamental symmetry
$I_{\mathscr{H}}\oplus -I_{\tilde{\mathscr{H}_1}}$. So $V$ is a
$({\bf\tilde{J}_1},{\bf\tilde{J}_2})$-unitary. Let
\begin{eqnarray*}
X=\left( \begin{array}{cc} A & 0\\
0 & 0\end{array}\right):\mathscr{H}\oplus\tilde{\mathscr{H}_2}\rightarrow \mathscr{H}\oplus\tilde{\mathscr{H}_2}.
\end{eqnarray*}
Since
\begin{eqnarray*}
X^{\sharp}={\bf\tilde{J}_2}X^*{\bf\tilde{J}_2}=\left( \begin{array}{cc} J & 0\\
0 & I_{\tilde{\mathscr{H}_2}}\end{array}\right)\left( \begin{array}{cc} A^* & 0\\
0 & 0\end{array}\right)\left( \begin{array}{cc} J & 0\\
0 & I_{\tilde{\mathscr{H}_2}}\end{array}\right)=\left( \begin{array}{cc} A^{\sharp} & 0\\
0 & 0\end{array}\right)=\left( \begin{array}{cc} A & 0\\
0 & 0\end{array}\right)=X,
\end{eqnarray*}
we conclude that $X$ is ${\bf\tilde{J}_2}$-selfadjoint. Moreover,
\begin{eqnarray*}
{\bf\tilde{J}_2}X=\left( \begin{array}{cc} J & 0\\
0 & I_{\tilde{\mathscr{H}_2}}\end{array}\right)\left( \begin{array}{cc} A & 0\\
0 & 0\end{array}\right)=\left( \begin{array}{cc} JA & 0\\
0 & 0\end{array}\right)\geq 0.
\end{eqnarray*}
Hence $X$ is ${\bf\tilde{J}_2}$-positive. It is clear that
$U^{\sharp}XU$ and $V^{\sharp}XV$ are ${\bf\tilde{J}_1}$-positive
operators. If $\lambda\notin\sigma(A)\cup\{0\}$, then we have
\begin{eqnarray*}
(\lambda I_{\mathscr{H}\oplus\tilde{\mathscr{H}_2}}-X)^{-1}
=\left( \begin{array}{cc}(\lambda I_{\mathscr{H}}-A)^{-1} & 0\\
0 & \lambda^{-1}I_{\tilde{\mathscr{H}_2}}\end{array}\right),
\end{eqnarray*}
so that $\lambda\notin\sigma(X)$. Hence
$\sigma(X)\subseteq\mathcal{U}$. Since
$\sigma(U^{\sharp}XU)=\sigma(V^{\sharp}XV)=\sigma(X)$,
the spectra of ${\bf\tilde{J}_1}$-positive operators
$U^{\sharp}XU$ and $V^{\sharp}XV$ are contained in $\mathcal{U}$.
Consequently, by the Krein-operator convexity of $f$ and Lemma
2.6, we infer that
\begin{eqnarray}
\left( \begin{array}{cc} f(C^{\sharp}AC) & 0\\
0& f(D^{\sharp}AD)\end{array}\right)&=&f\left( \begin{array}{cc} C^{\sharp}AC & 0\\
0& D^{\sharp}AD\end{array}\right)\nonumber\\
&=&f\left(\frac{U^{\sharp}XU+V^{\sharp}XV}{2}\right)\nonumber\\
&\leq^{{\bf\tilde{J}_1}}&\frac{f(U^{\sharp}XU)+f(V^{\sharp}XV)}{2}\nonumber\\
&=&\frac{U^{\sharp}f(X)U+V^{\sharp}f(X)V}{2}\nonumber\\
&=&\frac{1}{2}U^{\sharp}\left( \begin{array}{cc} f(A)& 0\\
0& f(0)\end{array}\right)U+\frac{1}{2}V^{\sharp}\left( \begin{array}{cc} f(A)& 0\\
0& f(0)\end{array}\right)V\nonumber\\
&=&\frac{1}{2}U^{\sharp}\left( \begin{array}{cc} f(A)& 0\\
0& 0\end{array}\right)U+\frac{1}{2}V^{\sharp}\left( \begin{array}{cc} f(A)& 0\\
0& 0\end{array}\right)V\nonumber\\
&=&\left( \begin{array}{cc} C^{\sharp}f(A)C & 0\\
0& D^{\sharp}f(A)D\end{array}\right).\nonumber
\end{eqnarray}
Hence
\begin{eqnarray*}
\left( \begin{array}{cc} J & 0\\
0& I_{\tilde{\mathscr{H}_1}}\end{array}\right)\left( \begin{array}{cc} f(C^{\sharp}AC) & 0\\
0& f(D^{\sharp}AD)\end{array}\right)\leq\left( \begin{array}{cc} J & 0\\
0& I_{\tilde{\mathscr{H}_1}}\end{array}\right)\left( \begin{array}{cc} C^{\sharp}f(A)C & 0\\
0& D^{\sharp}f(A)D\end{array}\right).
\end{eqnarray*}
It follows that $Jf(C^{\sharp}AC)\leq JC^{\sharp}f(A)C$. Therefore $f(C^{\sharp}AC)\leq^{J}C^{\sharp}f(A)C$.
\end{proof}
\begin{remark}
In the classical case the usual operator convexity is equivalent to
\begin{eqnarray*}
f(C^*AC)\leq C^*f(A)C,
\end{eqnarray*}
where $A$ is selfadjoint and $C$ is an isometry; see \cite[Theorem
1.9]{Mond} or \cite[Section V]{BHA}. Validity of an analogous
relation as $f(C^{\sharp}AC)\leq^{J}C^{\sharp}f(A)C$ for all
$J$-selfadjoint $A$ and all invertible $J$-isometries $C$ (i.e.
$C^{\sharp}C=I$ ) on a Krein space $(\mathscr{H},J)$ is vain,
since if $C^{\sharp}C=I$, then $CC^{\sharp}=I$ and so
$C^{\sharp}=C^{-1}$ and we enter into a trivial situation.
\end{remark}
In the following example we show that in contrast with usual
operator convex functions, the condition (2.4) in Theorem 2.7 is
not equivalent to the Krein-operator convexity of $f$.
\begin{example}
Let $A$ be a $J$-positive operator and $C$ be an invertible $J$-contraction
on a Krein space $(\mathscr{H},J)$. Since $A$ is $J$-selfadjoint
and $CC^{\sharp}\leq^{J}I_{\mathscr{H}}$, we have
\begin{eqnarray*}
(C^{\sharp}AC)^2=(C^{\sharp}A^{\sharp})CC^{\sharp}(AC)=
(AC)^{\sharp}CC^{\sharp}AC\leq^{J}(AC)^{\sharp}I_{\mathscr{H}}AC=C^{\sharp}AAC.
\end{eqnarray*}
It follows that $(C^{\sharp}AC)^2\leq^{J}C^{\sharp}A^2C$.\\
Therefore the function $f(t)=t^2$ satisfies (2.4). By Example 2.3, this function, however, is not Krein-operator convex.
\end{example}

\bibliographystyle{amsplain}

\end{document}